\newcommand{\cN}{\mathcal{N}}
\renewcommand{\d}{\mathrm{d}}
\newcommand{\E}{\mathbb{E}}
\newcommand{\N}{\mathbb{N}}
\renewcommand{\P}{\mathbb{P}}
\newcommand{\R}{\mathbb{R}}
\newcommand{\Z}{\mathbb{Z}}
\renewcommand{\(}{\left(}
\renewcommand{\)}{\right)}
\newtheorem{theorem}{Theorem}[section]
\newtheorem{lemma}[theorem]{Lemma}
\newtheorem{remark}[theorem]{Remark}
\theoremstyle{definition}
\newtheorem{definition}[theorem]{Definition}
\title{Loglinear Hawkes processes\footnote{Supported partially by University of Warsaw grant IDUB-POB3-D110-003/2022}} 
\author[1]{Tomasz R. Bielecki}
\author[2]{Jacek Jakubowski}
\author[3]{Matthias Kirchner}
\author[4]{Mariusz Niew\k{e}g\l{}owski}
\affil[1]{Illinois Institute of Technology}
\affil[2]{University of Warsaw}
\affil[3]{University of Teacher Education NMS Bern}
\affil[4]{Warsaw University of Technology}
\date{\today}
\begin{document}
\maketitle
\begin{abstract}
This paper discusses a special class of nonlinear Hawkes processes, where the rate function is the exponential function. We call these processes loglinear Hawkes processes. In the main theorem, we give   sufficient conditions for explosion and non-explosion that cover a large class of practically relevant memory functions.
We also investigate stability. In particular, we show that for nonpositive memory functions the loglinear Hawkes process is stable. The paper aims at providing a theoretical basis for further research and applications of these processes.

\end{abstract}


%

%
%

\section{Introduction}
A Hawkes process is a stochastic model for the distribution of points on the real line. Most of the time, the line is interpreted as time, and the points as events in time. The characteristic property of a Hawkes process is that the probability for a new event at some time $t\in\R$ can be represented as a function of the events before time $t$. For the case when this dependence is linear (in a certain sense), the process has been introduced in \citet{hawkes71a} and \citet{hawkes71b}. The nonlinear case has been introduced in \citet{bremaud96}. We follow notation and concepts of the latter work.\\

Let $(\Omega,\mathcal{F},\mathbb{P})$ be a probability space rich enough to carry all random variables involved. On $(\Omega,\mathcal{F},\mathbb{P})$, define a set
$\{T_n\}_{n\in\Z}$ with
\begin{equation}\label{eq:point_sets}
T_n:\Omega \to \R\cup\{\pm\infty\},\quad n\in\Z, \text{ }
\end{equation}
measurable, such that
$$
\ldots \leq T_{-2}\leq T_{-1}\leq T_0 \leq 0 < T_1\leq T_2\leq \dots
$$
a.s.
with equality between $T_n$ and $T_{n+1}$ only if both equal $-\infty$ or $\infty$.
 We assume that
 \begin{equation}
 \lim_{n\to -\infty} T_n=-\infty\label{eq:left_limit},
 \end{equation}
 whereas the \emph{explosion time} 
\begin{align}
\label{eq:explosion_time}
T_\infty :=
\lim_{n\to\infty} T_n\quad (\in(0,\infty])
\end{align}
might be finite with positive probability. Note that
\begin{align}\label{eq:counting_measure}
N(\cdot) := \sum_{n\in\Z}\delta_{T_n}(\cdot)1\{|T_n| <\infty\}
\end{align}
defines a random counting measure on $(\R,\mathcal{B}(\R))$. In the sequel, we call sets $\{T_n\}_{n\in\Z}\cap\R$ with $\{T_n\}_{n\in\Z}$ as defined in \eqref{eq:point_sets} as well as the derived counting measure \eqref{eq:counting_measure} and its distribution a \emph{point process on $\R$}. If for the explosion time   we have $T_\infty =\infty$ a.s., we call the point process \emph{nonexplosive}.  We call the point process \emph{explosive} if $T_\infty <\infty$ with positive probability.\\

For any such point process $N$ on $\R$, define increasing $\sigma$-fields
$$
\mathcal{F}_t^N:=\sigma\(N(A), A\in\mathcal{B}((-\infty,t]) \),\quad t\in\R.
$$
The filtration $\{\mathcal{F}_t^N\}_{t\in\R}$ is the \emph{internal history of $N$}. Any increasing filtration $\{\mathcal{F}_t\}_{t\in\R}$ such that $\mathcal{F}_t^N\subset \mathcal{F}_t,t\in\R,$ is called a \emph{history of $N$}. A nonnegative process  $\{\lambda(t)\}_{t\geq0}$ that is predictable with respect to $\{\mathcal{F}_t\}_{t\geq 0}$, a history of N, is called \emph{$\{\mathcal{F}_t\}_{t\geq 0}$-intensity of N} if
\begin{align}\label{eq:intensity}
\E \left[N\((a,b]\)|\mathcal{F}_a\right] = \E \left[\int_a^b \lambda(t) \d t\Bigg|\mathcal{F}_a\right]\quad \text{for all $0\leq  a < b$.}
\end{align}

A nonlinear
Hawkes process is a simple point process on $(\R,\mathcal{B}(\R))$ following some initial condition  on $(\R_{\leq 0},\mathcal{B}(\R_{\leq 0}))$ and such that its {$\{\mathcal{F}_t^N\}_{t\geq 0}$}-intensity is of the form
\begin{equation}
\lambda(t) = \phi\(\int_{(-\infty,t)}h(t - s)N(\d s)\)1\{t<T_{\infty}\},\quad t\geq 0,
\label{eq:dynamics}
\end{equation}
with $h$ a \emph{memory function}  supported on $\R_{\geq 0}$, such that
 the integral in \eqref{eq:dynamics} is well-defined, and a \emph{rate function} $\phi:\R\to\R_{\geq 0}$. The functions $h$ and $\phi$ are such that the process $\lambda$ is $\{\mathcal{F}^N_t\}_{t\geq 0}$-predictable.

For linear Hawkes processes (i.e. with $\phi(x) = \nu + x$), the memory function $h$ has to be essentially nonnegative because otherwise, for any $t$ with $0<t<T_\infty$,
$$\lambda(t)=v+\int_{(-\infty,t)}h(t - s)N(\d s)$$
 might be negative with positive probability. Thus, linear Hawkes processes exclude inhibition effects. However, in many data contexts, e.g., in neuroscience or in high frequency trading, we observe refractory periods: shortly after a neuron has fired, or shortly after the price has jumped, there will be no other event. This is one reason why authors try to find rate functions $\phi$ mapping $\R$ to  $\R_{\geq 0}$ yielding a tractable Hawkes model allowing for inhibition.\\

\citet{bremaud96} proves stability of univariate nonlinear Hawkes processes under various  Lipschitz conditions on the rate function $\phi$ using contraction arguments. The results are generalized for marked processes in
\citet{massoulie98}. The thesis \citet{zhu13} studies asymptotics of nonlinear Hawkes processes under the additional assumption that the memory function $h$ is nonnegative. Furthermore, the rate function is Lipschitz or linearly  dominated. Also, \citet{ditlevsen17}, \citet{duarte19}, and \citet{gao23} work under a Lipschitz assumption on the rate function. One specific parametric nonlinear Hawkes process caught some attention, namely the truncated linear model with rate function $\phi(x) = (\nu +  x)^+$; see, e.g., \citet{costa20} and \citet{deutsch23}.\\

The present paper examines nonlinear Hawkes processes with rate function $\phi(x) = \exp(\nu + x)$ which makes the conditional intensity of the process a loglinear function of the past of the process. This choice of rate function is not covered by earlier theoretical results on nonlinear Hawkes processes because the exponential function is neither Lipschitz nor can it be dominated in a linear way.

In Section~\ref{sec:motivation} we motivate the choise of the exponential transfer function.
In Section~\ref{sec:existence} we formally construct a loglinear Hawkes process. In Section~\ref{sec:nonexplosion} we give criteria for nonexplosion of the process. In Section~\ref{sec:stability} we recall the meaning of `stability', and give first stability results. In Section~\ref{sec:outlook} we give an outlook for potential future research.

\section{Motivation}\label{sec:motivation}
Loglinear Hawkes processes are very similar to `spike response models'---a well-recognized modeling class for neural spike trains in the field of computational neuroscience. Starting point of the concept is a deterministic model for the firing times of a neuron counted by a (deterministic)
 point process $N$ on $(0,\infty)$, with
$$
u(t) :=\nu + \int_{(0,t)}h(t - s)N(\d s)  + I(t)\label{eq:srm_deterministic},\quad t\geq 0,
$$
the \emph{membrane potential} (the difference of the voltage inside and outside the cell, typically measured in mV); see, e.g., \citet[eq.(9.1)]{gerstner14}. 
Here, $I(t)$ denotes the \emph{input current}, e.g.\ describing the influence of the spikes of other neurons or a battery that is connected to the neuron.
Note that in this context, the memory function $h$ is mostly chosen continuous. Memory functions extracted from data typically exhibit a very negative `refractory period' of about 1–3ms directly after zero followed either by monotone increase to zero, or by a rapid increase to a short positive segment called `overshoot', and then a less steep decreasing tail; see, e.g., the `history filters' in \citet[Fig. 2]{gerhard17}. If the membrane potential $u(t)$ becomes greater than some threshold $\theta\in\R$ (typically around $-50$ mV), then the neuron fires. By the refractory property of the memory function, the membrane potential consequently falls below $\theta$ and then increases again with $h$. As a next conceptual step, in order to model the variability in the data, the point process of firing times $N$ is considered to be random with firing rate
\begin{equation}
\lambda(t) = \rho_0 \exp\( \frac{u(t)-\theta}{\Delta}\),\quad t \geq 0, \label{eq:firing_rate}
\end{equation}
where
$
u
$
is again the membrane potential, $\theta  $ is the \emph{`leaky' threshold}, $\rho_0>0 $ is the \emph{firing rate at threshold},
and $\Delta>0$ is a parameter for the `sharpness of the threshold': for $\Delta \downarrow 0$ one expects a return to the deterministic model.

Obviously, n \eqref{eq:firing_rate} $\rho_0$ could be eliminated (for fixed $\Delta$) by picking an alternative threshold $\tilde{\theta} := \theta/\Delta + \log(\rho_0) $, but this formulation gives a good interpretation of what is happening physiologically.

The first mentioning of this model seems to be \citet{gerstner92}. Later, e.g. \citet{truccolo05}, \citet{pillow08}, and \citet{truccolo16} apply an exponential transfer function for relating
neural spiking activity to spiking history. \cite{gerstner14}, Chapter 9, gives a summary for the model $\phi(x) = \exp(\nu + x)$. \citet{jolivet06} shows an excellent fit of the model to experimental data. This particularly good fit may be attributed to the fact that the exponential transfer function renders negative feedback particularly potent. In this scenario, a spike in the system reduces the firing rate by a factor  $e^{h(0)}$, with $h(0)$ negative, rather than by a fixed jump down in size around  $h(0)$ as it would be the case with a softplus transfer function $\phi(x) = (\nu + x)^+$  or with rectifier linear unit $\phi(x) = \log(1 + \exp(\nu + x))$. As another advantage, note that the passage from the noisy to the deterministic model as $\Delta\downarrow 0$ in \eqref{eq:firing_rate} is particularly fast with the exponential rate function.

The cited literature does not consider theoretical aspects of the model such as existence, nonexplosion, or stability. Nonexplosion criteria are surrounded by introducing a strong or even `absolute' refractory period that is present in the data.
If the memory function has a refractory period $\tau>0$, then the point process cannot have more than $1/\tau$ events per time unit. This assumption makes sense in this context: the spike itself---though modeled as a point in time---actually has a duration of 1--3ms and during such a spike, we cannot expect another spike. Nevertheless, `stability' is an issue in the neuroscience literature:  for example \citet{gerhard17} notes that simulations from calibrated leaky spike response models can result in spike trains that are unrealistically active. The paper applies a technique called `quasi renewal approximation' as developed in \citet{naud12} to provide some heuristic stability criteria.

Our goal is to provide more formal results on the time asymptotics of process $N$ in these kinds of models without absolutely refractory periods. In this paper, we consider the one-dimensional case (one neuron) and we ignore the exogeneous time-dependent input $I(t)$. Ultimately, our aim is to consider networks of neurons and their mean-field limit---similarly as in
\citet{schmutz22} or \citet{fonte22}. In these works, bounded rate functions are used whereas we want to apply the exponential transfer function that has proven so fitting on smaller networks.

In view of the success of the loglinear Hawkes process in computational neuroscience, we assume there might be other fields of application where the multiplicative structure of loglinear models beats linear or bounded models. In particular, if the exponential transfer function seems to be the gold standard for biological neurons, it seems promising to adapt exponential transfer to Hawkes-based artificial neural networks as first proposed in \citet{mei17} (with softplus transfer function).

\section{Existence}\label{sec:existence}
\noindent We start with the definition of the loglinear Hawkes process.
\begin{definition}[Loglinear Hawkes process]\label{def:loglinear_hawkes_process}
		Let $N$ be a point process on $\R$ with explosion time $T_\infty \in (0,\infty]$, $\nu\in\R$, and let $h$ be a memory function with support in $\R_{\geq 0}$ such that a.s.
		\[ \int_{(-\infty,t)}\min\{0, h(t - s)\}N(\d s) > -\infty,  \quad 0\leq t <T_\infty.
		\]
		The point process $N$  is called
		\emph{loglinear Hawkes process with initial condition $N^0$}, a point process supported on $\R_{\leq0}$,
		 if $N$ equals $N^0$  on $\R_{\leq0}$
		   and has the $\{\mathcal{F}_t^N\}_{t\geq 0}$-intensity of the form
		\begin{equation}
			\lambda(t) = \exp\left\{\nu  +  \int_{(-\infty,t)}h(t - s)N(\d s)\right\}1\{t< T_\infty\},\quad t\geq 0.
			\label{eq:log_lin_dynamics}
		\end{equation}		
		If $N^0((-\infty,0]) = 0$ a.s., then $N^0$ is called a \emph{void initial condition}.
		If $T_\infty = \infty$ a.s., we call the loglinear Hawkes process \emph{nonexplosive} and otherwise \emph{explosive}.
	\end{definition}

Note that \citet[above Definition 1]{bremaud96} define an `initial condition' of a nonlinear  Hawkes process
in terms of a property of the restriction $N^-$ of $N$ to $\R_{\leq 0}$ whereas we define it in terms of a point process with support on $\R_{\leq 0}$.

The {finiteness} of the explosion time $T_\infty$ of a loglinear Hawkes process depends on the choice of the initial condition, as well as on the parameters of the $\{\mathcal{F}_t^N\}_{t\geq 0}$-intensity.
In Section~\ref{sec:nonexplosion} we will give sufficient conditions for explosion, i.e. $\mathbb{P}(T_\infty<\infty)>0 $, and nonexplosion , i.e. $\mathbb{P}(T_\infty=\infty )=1$.

\begin{lemma}[Poisson embedding]\label{lemma:poisson_embedding}
 Let $\mathcal{N}$ be a homogeneous Poisson process  on $\R_{\geq 0}\times \R_{\geq 0}$ with intensity measure equal to Lebesgue measure. Let
 $$
 \mathcal{F}_t^{\mathcal{N}}:=\sigma\big(\mathcal{N}((a, b] \times C):\ 0 < a<b\leq t,\ C\in \mathcal{B}(\R_{\geq 0})\big)
 $$
and $N$ be a point process on $\R$ with the initial condition $N^0$ on $\R_{\leq 0}$ independent of $\mathcal{N},$ and such that
\begin{equation}\label{eq:pois_thinning_intensity}
N(A)  = \int_{A \times \mathbb{R}_{\geq 0}} 1\{ z \leq\lambda(t)\} \mathcal{N}(\d t \times \d z)\quad \text{for all $A\in\mathcal{B}(\R_{>0})$}
\end{equation}
 with
  $\lambda$  being a $\{\sigma(\mathcal{F}_t^{\mathcal{N}},\mathcal{F}_0^{{N}})\}_{t\geq0}$-predictable process on $\R_{\geq 0}$. Then $\lambda$ is the $\{\sigma(\mathcal{F}_t^{\mathcal{N}},\mathcal{F}_0^{{N}})\}_{t\geq 0}$-intensity of $N$. Furthermore, if $\lambda$ is $\{\mathcal{F}_t^{{N}}\}_{t\in\geq}$-predictable, then $\lambda$ is the $\{\mathcal{F}_t^{{N}}\}_{t\geq 0}$-intensity.
\end{lemma}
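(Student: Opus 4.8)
The plan is to reduce the statement to the classical fact that a nonnegative predictable integrand, integrated against a Poisson random measure, is compensated by its integral against the intensity measure. Write $\mathcal{F}_t:=\sigma(\mathcal{F}_t^{\mathcal{N}},\mathcal{F}_0^{N})$. Since $\lambda$ is nonnegative and $\{\mathcal{F}_t\}$-predictable by hypothesis, the definition \eqref{eq:intensity} of an intensity shows that it suffices to prove, for all $0\leq a<b$,
\[
\E\!\left[\int_{(a,b]\times\R_{\geq 0}}1\{z\leq\lambda(t)\}\,\mathcal{N}(\d t\times\d z)\,\big|\,\mathcal{F}_a\right]=\E\!\left[\int_a^b\lambda(t)\,\d t\,\big|\,\mathcal{F}_a\right],
\]
both sides understood in $[0,\infty]$: the left-hand side equals $\E[N((a,b])\mid\mathcal{F}_a]$ by \eqref{eq:pois_thinning_intensity}, while $\int_{\R_{\geq 0}}1\{z\leq\lambda(t)\}\,\d z=\lambda(t)$ identifies the inner integral on the right as the candidate compensator. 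Two ingredients are then needed: first, that $\mathcal{N}$ is an $\{\mathcal{F}_t\}$-Poisson random measure, i.e.\ its restriction to $(a,\infty)\times\R_{\geq 0}$ is independent of $\mathcal{F}_a$; and second, a monotone-class argument upgrading this to all nonnegative predictable integrands.

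For the first ingredient, note that $\mathcal{F}_a^{\mathcal{N}}$ is generated by the values of $\mathcal{N}$ on subsets of $(0,a]\times\R_{\geq 0}$; since $\mathcal{N}$ is Poisson it assigns independent values to disjoint sets, so the restriction of $\mathcal{N}$ to $(a,\infty)\times\R_{\geq 0}$ is independent of $\mathcal{F}_a^{\mathcal{N}}$. As $\mathcal{N}$ is moreover independent of $\mathcal{F}_0^{N}$, conditioning on $\mathcal{F}_0^{N}$ changes neither the law of $\mathcal{N}$ nor this independence of increments, whence the restriction of $\mathcal{N}$ to $(a,\infty)\times\R_{\geq 0}$ is independent of $\sigma(\mathcal{F}_a^{\mathcal{N}},\mathcal{F}_0^{N})=\mathcal{F}_a$. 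In particular, $\E[\mathcal{N}((c,d]\times C)\mid\mathcal{F}_c]=(d-c)\,|C|$ for $c<d$ and $C\in\mathcal{B}(\R_{\geq 0})$ of finite Lebesgue measure $|C|$.

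For the second ingredient, let $\mathcal{P}$ denote the $\{\mathcal{F}_t\}$-predictable $\sigma$-field on $\Omega\times\R_{\geq 0}$; I would show that
\[
\E\!\left[\int_{(a,b]\times\R_{\geq 0}}H(t,z)\,\mathcal{N}(\d t\times\d z)\,\big|\,\mathcal{F}_a\right]=\E\!\left[\int_{(a,b]\times\R_{\geq 0}}H(t,z)\,\d t\,\d z\,\big|\,\mathcal{F}_a\right]
\]
for every nonnegative $\mathcal{P}\otimes\mathcal{B}(\R_{\geq 0})$-measurable $H$. This is first checked on the $\pi$-system of predictable rectangles $H(\omega,t,z)=1_F(\omega)\,1_{(c,d]}(t)\,1_C(z)$ with $a\leq c<d\leq b$, $F\in\mathcal{F}_c$, and $C$ of finite measure: there the left-hand side equals $\E\!\left[1_F\,\E[\mathcal{N}((c,d]\times C)\mid\mathcal{F}_c]\,\big|\,\mathcal{F}_a\right]=(d-c)\,|C|\,\E[1_F\mid\mathcal{F}_a]$, which is exactly the right-hand side. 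The identity extends by linearity to nonnegative simple functions and then, by the functional monotone class theorem together with monotone convergence (with an extra truncation of the $z$-variable to $[0,K]$, $K\to\infty$), to all nonnegative $\mathcal{P}\otimes\mathcal{B}(\R_{\geq 0})$-measurable $H$. Taking $H(\omega,t,z)=1\{z\leq\lambda(\omega,t)\}$, which is $\mathcal{P}\otimes\mathcal{B}(\R_{\geq 0})$-measurable because $\lambda$ is $\{\mathcal{F}_t\}$-predictable, yields the displayed identity from the first paragraph, so $\lambda$ is the $\{\mathcal{F}_t\}$-intensity of $N$.

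For the last assertion, note that by \eqref{eq:pois_thinning_intensity} and the $\{\mathcal{F}_t\}$-predictability of $\lambda$, the restriction of $N$ to $(-\infty,t]$ is a measurable functional of $N^0$ (hence $\mathcal{F}_0^{N}$-measurable) and of $\mathcal{N}$ on $(0,t]\times\R_{\geq 0}$ (hence $\mathcal{F}_t^{\mathcal{N}}$-measurable), so $\mathcal{F}_t^{N}\subseteq\mathcal{F}_t$ for all $t\geq 0$ and $\{\mathcal{F}_t^{N}\}$ is a history of $N$. If in addition $\lambda$ is $\{\mathcal{F}_t^{N}\}$-predictable, then for $0\leq a<b$ one applies $\E[\,\cdot\mid\mathcal{F}_a^{N}]$ to the identity just proved (stated for $\mathcal{F}_a$) and uses the tower property over $\mathcal{F}_a^{N}\subseteq\mathcal{F}_a$ to obtain $\E[N((a,b])\mid\mathcal{F}_a^{N}]=\E[\int_a^b\lambda(t)\,\d t\mid\mathcal{F}_a^{N}]$, i.e.\ $\lambda$ is the $\{\mathcal{F}_t^{N}\}$-intensity. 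The main obstacle is the second ingredient: one must handle the two-parameter predictable $\sigma$-field $\mathcal{P}\otimes\mathcal{B}(\R_{\geq 0})$ with care and deal with the fact that the integrals are only $[0,\infty]$-valued, so it is the monotone-class and monotone-convergence bookkeeping — not the (standard) increment independence of the Poisson measure — that requires the real work.
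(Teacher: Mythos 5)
Your proposal is correct, and its second half (deducing the $\{\mathcal{F}_t^N\}_{t\geq 0}$-intensity statement from the larger-filtration statement via $\mathcal{F}_a^N\subseteq\sigma(\mathcal{F}_a^{\mathcal{N}},\mathcal{F}_0^N)$ and the tower property) is exactly the paper's argument; you even justify the filtration inclusion that the paper leaves implicit. The difference lies in the first half: the paper disposes of the first assertion in one line by citing Lemma~3 of Br\'emaud--Massouli\'e (1996) (their ``Poisson embedding'' lemma), whereas you rederive that result from scratch --- establishing that $\mathcal{N}$ restricted to $(a,\infty)\times\R_{\geq 0}$ is independent of $\sigma(\mathcal{F}_a^{\mathcal{N}},\mathcal{F}_0^N)$ (the step where independence of $\mathcal{N}$ from $N^0$ is genuinely needed, and which you argue correctly), verifying the compensator identity on predictable rectangles $1_F 1_{(c,d]} 1_C$ with $F\in\mathcal{F}_c$, and extending by the functional monotone class theorem and monotone convergence to all nonnegative $\mathcal{P}\otimes\mathcal{B}(\R_{\geq 0})$-measurable integrands, finally specializing to $H=1\{z\leq\lambda(t)\}$ and using $\int_{\R_{\geq 0}}1\{z\leq\lambda(t)\}\,\d z=\lambda(t)$. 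What your route buys is self-containedness and transparency about where each hypothesis (predictability of $\lambda$, independence of $N^0$ and $\mathcal{N}$) enters; what the paper's route buys is brevity, at the cost of deferring all of this bookkeeping to the reference. No gaps.
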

\begin{proof} The first statement follows from Lemma 3 in \citet{bremaud96}. The second statement   immediately follows from the tower property of conditional expectations, namely for $0<a<b$ we see that
\begin{align*}
\E \left[N((a,b])|\mathcal{F}^N_a\right]
&\!=  \E \left[\E \left[N((a,b])\Big|\sigma\(\mathcal{F}^\cN_a,\mathcal{F}_0^{{N}}\)\right]\Big|\mathcal{F}^N_a\right]\\
&\!=\E \left[\E \left[\int_a^b \lambda(t) \d t\Big|\sigma\(\mathcal{F}^\cN_a,\mathcal{F}_0^{{N}}\)\right]\Big|\mathcal{F}^N_a\right]  \!=\E \left[\int_a^b \lambda(t) \d t\Big|\mathcal{F}^N_a\right].
\end{align*}
The proof is finished.
\qed\end{proof}
From Lemma \ref{lemma:poisson_embedding}, we obtain the following construction of a loglinear Hawkes process with the initial condition $N^0$:

\begin{remark}
In what follows, for  a random variable $Z$ we write
$$
\mathcal{N}((a,b]\times [0, Z])(\omega):= \int_{(a,b]\times \R_{\geq 0} } 1\{z\leq Z(\omega)\}\mathcal{N}(\d t \times \d z)(\omega).
$$
\end{remark}

\begin{lemma}[Construction]\label{lemma:construction}
Construct points $\ldots <  T_{-1}< T_{0}\leq 0<T_1<T_2< \dots$ as follows: Pick $\{T_n\}_{n\leq 0}$ giving the process $N^0$. Let $\mathcal{N}$ be a Poisson random measure on $\R_{\geq 0} \times \R_{\geq 0}$ with intensity measure equal to Lebesgue measure and independent of $\{T_n\}_{n\leq 0}$.
For $n>0$, recursively set

\begin{align}\label{eq:recursion}
T_n := \inf\left\{t> T_{n-1}: \cN\(\{t\}\times \left[0,\exp\left\{\nu  +  \sum_{k< n} h(t - T_k)\right\}\right]\)>0\right\}
\end{align}
and let $N(A):=\sum_{n\in\Z} \delta_{T_n}(A)$ for $A\in \mathcal{B}(\R)$. Then $N$ is the unique process such that
\begin{align}
N(A) =  \sum_{n\leq 0} \delta_{T_n}(A\cap \R_{<0})  + \int_{(A\cap \R_{\geq 0}) \times \mathbb{R}_{\geq 0}} \!1\left\{ z \leq\lambda(t)\right\} \mathcal{N}(\d t \times \d z)   \label{eq:thinning}
\end{align}
for $A\in\mathcal{B}(\R)$
with
\begin{align}\label{eq:constructed_intensity}
\lambda(t):=
\exp\left\{\nu  +  \int_{(-\infty, t)}h(t - s)N(\d s)\right\}1\{ t < T_\infty\}
\end{align}
for $t\geq 0$.
Furthermore, the constructed point process $N$ is a  loglinear Hawkes process with the initial condition $N^0$.
\end{lemma}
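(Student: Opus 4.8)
The plan is to show three things: (i) the recursion \eqref{eq:recursion} is well-posed and produces a strictly increasing sequence $\{T_n\}_{n>0}$ with a well-defined limit $T_\infty\in(0,\infty]$; (ii) the constructed $N$ satisfies the fixed-point equation \eqref{eq:thinning} with $\lambda$ given by \eqref{eq:constructed_intensity}, and is the unique such point process; and (iii) $N$ is a loglinear Hawkes process with initial condition $N^0$ in the sense of Definition \ref{def:loglinear_hawkes_process}. First I would fix $\omega$ and argue pathwise. Given $T_0,\dots,T_{n-1}$ (finite) and the realization of $\cN$, the set $\{t>T_{n-1}:\cN(\{t\}\times[0,c_n(t)])>0\}$, where $c_n(t):=\exp\{\nu+\sum_{k<n}h(t-T_k)\}$, is the set of atom-times of $\cN$ lying strictly above $T_{n-1}$ whose second coordinate is below the (deterministic, given the past) curve $c_n(\cdot)$. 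Since $\cN$ is a Poisson random measure of unit intensity on $\R_{\geq0}^2$, on any strip $(T_{n-1},T_{n-1}+M]\times[0,\sup_{t\le T_{n-1}+M}c_n(t)]$ there are finitely many atoms a.s., the curve $c_n$ being locally bounded because $h$ is a fixed function and the sum is finite; hence the infimum in \eqref{eq:recursion} is attained and $T_n>T_{n-1}$ strictly (two atoms of $\cN$ share no $t$-coordinate a.s.). This makes the recursion unambiguous for every $n$, and monotonicity gives the limit $T_\infty:=\lim_n T_n\in(0,\infty]$, so $N$ in \eqref{eq:counting_measure} is a well-defined point process on $\R$ with explosion time $T_\infty$.

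Next I would verify \eqref{eq:thinning}. On $\R_{<0}$ both sides equal $N^0$ by construction, so it suffices to treat $A\subset\R_{\ge0}$. The key observation is that for $t\in(T_{n-1},T_n]$ with $t<T_\infty$ we have $N((-\infty,t))=N^0(\R_{\le0})+\#\{k\ge1:T_k<t\}=\{T_1,\dots,T_{n-1}\}\cup\{T_k\}_{k\le0}$ counted, so that $\int_{(-\infty,t)}h(t-s)N(\d s)=\sum_{k<n}h(t-T_k)$ (the restriction to $t<T_\infty$ is exactly the indicator in \eqref{eq:constructed_intensity}); hence $\lambda(t)=c_n(t)$ on that interval. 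Therefore $T_n$ defined by \eqref{eq:recursion} is precisely the first $t>T_{n-1}$ with $\cN(\{t\}\times[0,\lambda(t)])>0$, which is the standard Poisson-thinning recursion, and a routine induction over $n$ shows that the atoms of $N$ in $(0,T_\infty)$ coincide with the $t$-coordinates of atoms of $\cN$ below the graph of $\lambda$, i.e. $N(A)=\int_{(A\cap\R_{\ge0})\times\R_{\ge0}}1\{z\le\lambda(t)\}\cN(\d t\times\d z)$ for $A\subset\R_{\ge0}$. For uniqueness, suppose $\widetilde N$ is another point process satisfying \eqref{eq:thinning} with its own intensity $\widetilde\lambda$ built from $\widetilde N$ via \eqref{eq:constructed_intensity}; since $\widetilde N=N^0$ on $\R_{\le0}$ and the right-hand side of \eqref{eq:thinning} depends on $\widetilde N$ only through its atoms in $(-\infty,t)$, one induces on the ordered atoms: the first atom of $\widetilde N$ in $\R_{>0}$ must equal $\inf\{t>T_0:\cN(\{t\}\times[0,c_1(t)])>0\}=T_1$, and inductively the $n$-th atom equals $T_n$, so $\widetilde N=N$.

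Finally, for (iii) I would invoke Lemma \ref{lemma:poisson_embedding}: the constructed $\lambda$ is $\{\sigma(\mathcal F^{\cN}_t,\mathcal F^N_0)\}_{t\ge0}$-predictable because on $(T_{n-1},T_n]$ it is the left-continuous (in fact, on that half-open interval, $\cN$-measurable) deterministic curve $c_n(\cdot)$, and the $T_k$ for $k<n$ are $\mathcal F^{\cN}_\cdot\vee\mathcal F^N_0$-stopping times; thus Lemma \ref{lemma:poisson_embedding} yields that $\lambda$ is the $\{\sigma(\mathcal F^{\cN}_t,\mathcal F^N_0)\}_{t\ge0}$-intensity of $N$. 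Moreover $\lambda(t)=\exp\{\nu+\int_{(-\infty,t)}h(t-s)N(\d s)\}1\{t<T_\infty\}$ is a deterministic functional of the atoms of $N$ strictly before $t$, hence $\{\mathcal F^N_t\}_{t\ge0}$-predictable, so the second part of Lemma \ref{lemma:poisson_embedding} upgrades this to the $\{\mathcal F^N_t\}_{t\ge0}$-intensity, which is exactly \eqref{eq:log_lin_dynamics}; together with $N=N^0$ on $\R_{\le0}$ this is the definition of a loglinear Hawkes process with initial condition $N^0$. The main obstacle I anticipate is the careful pathwise bookkeeping in step (ii)—matching the recursive infima \eqref{eq:recursion} with the "implicit" intensity \eqref{eq:constructed_intensity} and running the uniqueness induction cleanly past the (possibly finite) explosion time $T_\infty$, where one must make sure the indicator $1\{t<T_\infty\}$ makes $\lambda$ vanish and no spurious atoms are created.
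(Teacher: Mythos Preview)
Your proposal is correct and follows essentially the same route as the paper: verify that the recursive construction satisfies \eqref{eq:thinning}, prove uniqueness by induction on the atom index (comparing the $n$-th atom of a putative second solution $\widetilde N$ with $T_n$ after identifying the first $n-1$ atoms), and then invoke Lemma~\ref{lemma:poisson_embedding} together with left-continuity of $t\mapsto\int_{(-\infty,t)}h(t-s)N(\d s)$ to obtain the $\{\mathcal F^N_t\}_{t\ge0}$-intensity. The only difference is that you spell out the pathwise well-posedness of the recursion \eqref{eq:recursion} (local boundedness of $c_n$, attainment of the infimum, strict increase), which the paper dismisses as ``obvious''.
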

\begin{proof}
	It is obvious, that the process $N$ as recursively constructed in \eqref{eq:recursion} satisfies \eqref{eq:thinning}.
		For uniqueness, consider the points $\{\widetilde{T}_k\}_{k\in\Z}$ of another process  $\widetilde{N}$ satisfying  \eqref{eq:thinning} and with  the same initial condition $N^0$.
	
Uniqueness follows by induction: We have $T_k = \widetilde{T}_k$ for $k\leq 0$ by assumption. So assume that $T_k = \widetilde{T}_k$ for all k$\leq n-1$ with $n>0$. Then,
\begin{align*}
\widetilde{T}_{n}
& = \inf\{t > 0: \widetilde{N}((0, t)) = n\}\\
& \stackrel{\eqref{eq:thinning}}{=} \inf\left\{t > \widetilde{T}_{n-1}: \cN\(\{t\}\times \left[0,\exp\left\{\nu  +  \sum_{k< n} h(t - \widetilde{T}_k)\right\}\)\)>0\right\}\\
& \stackrel{\text{i.h.}}{=} \inf\left\{t > {T}_{n-1}: \cN\(\{t\}\times \left[0,\exp\left\{\nu  +  \sum_{k< n} h(t - {T}_k)\right\}\)\)>0\right\}\\
& \stackrel{\eqref{eq:thinning}}{=} {T}_{n}.\\
\end{align*}
	Finally, we aim to show that if $N$ satisfies $\eqref{eq:thinning}$, then $N$ also fulfills the criteria from Definition~\ref{def:loglinear_hawkes_process}.
 Clearly, $N$ equals $N^0$ on $\R_{\leq 0}$ by definition. Furthermore, $N$ has the $\{\mathcal{F}_t^N\}_{t\geq 0}$-intensity of the form \eqref{eq:log_lin_dynamics} on $\R_{> 0}$. Indeed, from \eqref{eq:thinning} we see   that $N$ fulfills \eqref{eq:pois_thinning_intensity} with $
 \lambda$   constructed in \eqref{eq:constructed_intensity}.  Note that the mapping $t\mapsto \int_{(-\infty, t)}h(t - s)N(\d s)$ is left continuous, so   the process $\lambda$ is $\{\mathcal{F}_t^{N}\}_{t\geq 0}$-predictable. Consequently, by Lemma~\ref{lemma:poisson_embedding}, $\lambda$ is the $\{\mathcal{F}_t^N\}_{t\geq0}$-intensity of $N$. Thus, $N$ is a loglinear Hawkes process.
\qed\end{proof}

\section{Explosion and nonexplosion}\label{sec:nonexplosion}
We call a point process with explosion time $T_\infty$ as defined in \eqref{eq:explosion_time} {nonexplosive} if $T_\infty = \infty$ a.s., and {explosive} if $T_\infty < \infty$ with positive probability. Next, we give sufficient conditions for nonexplosive and explosive behavior of loglinear Hawkes processes.
\begin{theorem}[Explosion and nonexplosion] Let $N$ be a loglinear Hawkes process as constructed in Lemma~\ref{lemma:construction} with memory function $h$ and initial condition $N^0$.
\label{thm:nonexplosion}
\begin{enumerate}[label =(\roman*)]

 \item \label{item:sufficient} Assume that $\sup_{t>0} h(t) < \infty$. If  there exists $\delta>0$ such that \\ $\sup_{t\in(0, \delta]}h(t) \leq 0$ and if
 \begin{equation}
 \sup_{t>0}\int_{(-\infty,0]} {h(t -r)} N^0(\d r) <\infty, \quad  \label{ass:initial_cond1}
\end{equation}
then $N$ is nonexplosive.
\item \label{item:necessary} If there exists $\delta>0$ such that $\inf_{t\in(0,\delta]} h(t)> 0$ and
\begin{align}
\int_{(-\infty,0]} {h(t -r)} N^0(\d r)>-\infty,\quad t>0,  \label{ass:initial_cond2}
\end{align}
 then $N$ is explosive.
 \end{enumerate}
\end{theorem}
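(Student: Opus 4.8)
The plan is to treat \ref{item:sufficient} and \ref{item:necessary} separately; in both cases I would use the construction of Lemma~\ref{lemma:construction} to compare the points of $N$ on short time windows with a Poisson process, and run an induction over the grid $\{k\delta\}_{k\geq 0}$. Abbreviate $I_0(t):=\int_{(-\infty,0]}h(t-r)\,N^0(\d r)$, the contribution of the initial condition to the exponent of $\lambda$.

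For \ref{item:sufficient}, set $M:=\sup_{t>0}h(t)<\infty$ and note $I_0(t)\leq C_0:=\sup_{t>0}I_0(t)<\infty$ by \eqref{ass:initial_cond1}. The key estimate is that for $t\in(n\delta,(n+1)\delta]$, splitting $\sum_{0<T_k<t}h(t-T_k)$ into the terms with $t-T_k\leq\delta$ (each $\leq 0$, since $h\leq 0$ on $(0,\delta]$) and those with $T_k<t-\delta\leq n\delta$ (whose sum is $\leq M^{+}N((0,n\delta])$), one obtains $\lambda(t)\leq\Lambda_n:=\exp\{\nu+C_0+M^{+}N((0,n\delta])\}$, with $\Lambda_n$ being $\mathcal{F}^N_{n\delta}$-measurable. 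I would then prove by induction on $n$ that $\mathbb{P}(T_\infty>n\delta)=1$: assuming this for $n$, $\Lambda_n<\infty$ a.s., and since $\mathcal{F}^N_{n\delta}\subseteq\sigma(\mathcal{F}^{\mathcal{N}}_{n\delta},\mathcal{F}^N_0)$ while $\mathcal{N}$ restricted to $(n\delta,\infty)\times\R_{\geq 0}$ is independent of the latter $\sigma$-field, the representation \eqref{eq:thinning} dominates the number of points of $N$ in $(n\delta,(n+1)\delta]$ by a $\mathrm{Poisson}(\delta\Lambda_n)$ variable given $\Lambda_n$, hence a.s.\ finite; so $T_\infty>(n+1)\delta$ a.s., and the induction closes. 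Letting $n\to\infty$ yields nonexplosion. (When $M\leq 0$ this collapses to the familiar fact that a process with bounded intensity is nonexplosive.)

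For \ref{item:necessary}, put $c:=\inf_{t\in(0,\delta]}h(t)>0$; by \eqref{ass:initial_cond2} and Fubini, $I_0(t)>-\infty$ for a.e.\ $t>0$ a.s. I would show that with positive probability every point $T_1<T_2<\cdots$ lies in $(0,\delta)$, which forces $T_\infty\leq\delta$. The leverage is that once $T_1,\dots,T_{m-1}\in(0,\delta)$, one has $\lambda(t)\geq\exp\{\nu+I_0(t)+(m-1)c\}$ for all $t\in(T_{m-1},\delta)$, so the cumulative intensity over any window gets the geometric boost $e^{(m-1)c}$. The delicate point is that $I_0$ is only finite a.e., not bounded below on an interval; I resolve it by working conditionally on $\mathcal{F}_0$, which freezes $I_0$ and hence the number $R:=\int_{\delta/2}^{\delta}(e^{I_0(t)}\wedge 1)\,\d t\in(0,\delta/2]$, and by choosing the target times adaptively. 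Given $\mathcal{F}_0$, pick $\delta/2=a_1<a_2<\cdots<\delta$ with $\int_{a_{m-1}}^{a_m}(e^{I_0}\wedge 1)\,\d t=R(1-\theta)\theta^{m-2}$ for $m\geq 2$ and $\theta:=e^{-c/2}\in(e^{-c},1)$. Since on $(T_{m-1},T_m)$ the intensity is a deterministic function of $\mathcal{F}_{T_{m-1}}$, the exponential waiting-time formula together with the lower bound on $\lambda$ yields, on $\{T_1<a_1,\dots,T_{m-1}<a_{m-1}\}$, the estimate $\mathbb{P}(T_m\geq a_m\mid\mathcal{F}_{T_{m-1}})\leq\exp\{-e^{\nu}R(1-\theta)e^{c}(e^{c}\theta)^{m-2}\}=:\beta_m$, and $e^{c}\theta=e^{c/2}>1$ makes $\sum_m\beta_m<\infty$ a.s.; likewise $\mathbb{P}(T_1<\delta/2\mid\mathcal{F}_0)=1-\exp\{-\int_0^{\delta/2}e^{\nu+I_0(t)}\,\d t\}>0$ a.s. Chaining these conditional probabilities (each a.s.\ strictly positive, and eventually $\geq 1-\beta_m$) gives $\mathbb{P}(\bigcap_{m\geq 1}\{T_m<a_m\}\mid\mathcal{F}_0)>0$ a.s., whence $\mathbb{P}(T_\infty<\infty)>0$ after taking expectations.

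The routine parts are the waiting-time identity and the Poisson-domination step, both immediate from Lemma~\ref{lemma:construction}. The real obstacle is the initial-condition term $I_0$ in \ref{item:necessary}: passing to $\mathcal{F}_0$ to turn $R$ into a constant, and picking the targets $a_m$ so that the increments of the lower bound on the cumulative intensity decay only geometrically — slower than $e^{(m-1)c}$ grows — are exactly what keeps the product of success probabilities bounded away from $0$.
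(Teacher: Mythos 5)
Your proof is correct. Part \ref{item:sufficient} is essentially identical to the paper's argument: the same induction over the grid $\{k\delta\}_{k\ge0}$, the same split of the exponent into the nonpositive recent contribution and the term $h_{\sup}N((0,k\delta])$ controlled by the induction hypothesis, and the same Poisson-domination step via the independence of $\mathcal{N}$ on $(k\delta,\infty)\times\R_{\geq0}$ from $\sigma(\mathcal{F}^{\mathcal{N}}_{k\delta},\mathcal{F}^N_0)$.

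For part \ref{item:necessary} the overall strategy coincides with the paper's (force all points into $(0,\delta)$ with positive probability by chaining conditional bounds on the gaps $T_m-T_{m-1}$, exploiting that the intensity picks up a factor $e^{c}$ per accumulated point), but your treatment of the initial-condition term $I_0(t)=\int_{(-\infty,0]}h(t-r)N^0(\d r)$ is genuinely different and more careful. The paper fixes deterministic target gaps $\varepsilon/k^2$ with $\varepsilon\sum k^{-2}<\delta$ and, in the key waiting-time estimate \eqref{eq:3}, lower-bounds the intensity by $\exp\{\nu+(k-1)h_\delta\}$ with the $N^0$-contribution effectively dropped from the exponent; this is harmless for finitely supported or nonnegatively weighted initial conditions but is exactly the point where \eqref{ass:initial_cond2} alone (pointwise finiteness of $I_0$, with no uniform lower bound on $(0,\delta]$) requires extra care. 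Your device --- conditioning on $\mathcal{F}_0$ to freeze the random density $e^{I_0}\wedge 1$, and choosing the targets $a_m$ adaptively so that the increments of the cumulative lower bound decay only like $\theta^{m}$ with $\theta=e^{-c/2}$, strictly slower than the growth $e^{mc}$ of the per-point boost --- closes this gap cleanly: it yields summable failure probabilities $\beta_m$ and hence a positive infinite product, uniformly over initial conditions satisfying \eqref{ass:initial_cond2}. The price is a slightly more elaborate bookkeeping (random, $\mathcal{F}_0$-measurable targets instead of the fixed sequence $\varepsilon/k^2$); what it buys is that the argument visibly covers initial conditions for which $\inf_{t\in(0,\delta]}I_0(t)=-\infty$, where the paper's written estimate would need to be patched.
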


\begin{proof}
\ref{item:sufficient} We aim to show that from the existence of some $\delta>0$ such that $h(t)\leq 0$ for $t\in[0,\delta]$ it follows that $\P\{N((0,t]) <\infty\} = 1$ for all $t\geq 0$.
We will prove by induction that
$N((0,k\delta])$ is a.s. finite for all  $k\geq 1$.

Note that for $t\in(0,\delta]$ we have
\begin{align*}
\log  \lambda(t)
& = \nu +\int \limits_{(-\infty, 0]} h \(t-r\)N^0(\d r)+\int \limits_{(0,\delta)}  \underbrace{h \(t-r\)}_{\leq 0 \text{ by assumption}}N(\d r)\\
& \leq \nu +\int \limits_{(-\infty, 0]} h \(t-r\)N^0(\d r) \leq \nu +\sup_{s>0}\int \limits_{(-\infty, 0]} h \(s-r\)N^0(\d r) =:M_0
\end{align*}
By \eqref{ass:initial_cond1}, the random variable $M_0$ is a.s. finite. Recall the thinning construction of $N$ as in Lemma \ref{lemma:construction}.
We have
\begin{align*}
N((0, \delta])
&= \int_{(0,\delta]\times \R_{\geq0}} 1\{z\leq {\lambda(s)} \}
\mathcal{N}(\d s\times \d z) \\
&\leq \int_{(0,\delta]\times \R_{\geq0}} 1\{z\leq e^{M_0} \}
\mathcal{N}(\d s\times \d z) < \infty\\
&= \mathcal{N}\((0,\delta]\times [0, e^{M_0}]\)<\infty
\end{align*}
a.s., since $M_0$ is independent of  $\mathcal{N}$ so that
\begin{align}
\P\left\{
\mathcal{N}\((0,\delta]\times [0, e^{M_0}]\)<\infty
\right\}&=\E\P\(\mathcal{N}\((0,\delta]\times [0, e^{M_0}]\)<\infty \Big|\sigma(M_0)\)\nonumber\\
&= \E g(M_0) = 1,\label{eq:cond_prob}
\end{align}
where $g(m) = \P\(\mathcal{N}\((0,\delta]\times [0, e^{m}]\)<\infty\right) = 1$ for $m\geq0$.
Next, we show in a similar way that under the induction hypothesis that $N((0,k\delta])$ is a.s. finite, also $N((k\delta, (k+1)\delta])$ (and, thus, $N((0, (k+1)\delta])$) is a.s. finite. For $s\in (k\delta, (k+1)\delta]$,
we have
\begin{align*}
&\log \lambda(s)=
\nu  + \int_{(-\infty,0]} h(s - r) N^0(\d r)+ \int_{(0,s)} h(s - r) N(\d r)\\
&=\nu  +{\int_{(-\infty,0]} h(s - r) N^0(\d r)} +
{
\int_{(0,k \delta]} h(s - r) N(\d r)
}  +\underbrace{\int_{(k\delta,s)} h(s - r) N(\d r)}_{\leq 0}\\
& \leq \nu + \sup_{t > 0} \int_{(-\infty,0]} h(t - r) N^0(\d r) + h_{\sup} N(0,k \delta] =: M_k,
\end{align*}
where  $h_{\sup} := \sup_{t\geq 0} h(t)<\infty$ by assumption. The random variable $M_k$ is a.s. finite by \eqref{ass:initial_cond1} and by induction hypothesis. Furthermore, it does not depend on $s$.
  Therefore, we have
\begin{align*}
N\((k\delta, (k+1) \delta]\)
&= \int_{(k\delta,(k+1)\delta]\times \R_{\geq0}} 1\{z\leq \underbrace{\lambda(s)}_{\leq e^{M_k}} \}
\mathcal{N}(\d s\times \d z)\\
 & \leq \mathcal{N}\((k\delta,(k+1)\delta] \times (0, e^{M_k}]\) < \infty
\end{align*}
a.s.\ since $M_k$ is independent of  $\mathcal{N}((k\delta, \cdot)\times A)$ for arbitrary $A\in\mathcal{B}(\R_{\geq 0})$, so we may argue as in \eqref{eq:cond_prob}. Therefore, for $t\geq 0$,
\begin{align*}
&\P\{N((0,t)) <\infty\}\\
 &\geq \P\left\{\sum_{k = 1}^{\lceil t/\delta\rceil} N(((k-1) \delta,k \delta]) <\infty\right\}
 =  \P\bigcap_{k = 1}^{\lceil t/\delta\rceil} \{N\(((k-1) \delta,k \delta]\)  <\infty\}
 = 1,
\end{align*}
as the probability of intersection of events whose probabilities are 1.
This finishes the proof of part (i). \\

\ref{item:necessary}  By assumption $h_\delta :=\inf_{t\in[0,\delta]} h (t)>0$ for some $\delta>0$.
We will show that $\P\{N((0,\delta])=\infty\}>0$. Set $S_k := T_k - T_{k-1}, k \geq 2$,  $S_1 := T_1$ and choose $\varepsilon>0$ such that  $\varepsilon\sum_{k\geq 1} {1}/{k^2}<\delta $. Then
 \begin{align}
\P\{&N((0,\delta])=\infty\}=\P\left\{\sum\limits_{k\geq1} S_k \leq \delta\right\}
\geq\P\bigcap\limits_{n\geq1}\left\{\sum\limits_{k=1}^n S_k
<\delta  \right\}\nonumber\\
&=\lim_{n\to \infty} \P\left\{\sum\limits_{k=1}^n S_k <\delta \right\}
\geq\limsup_{n\to\infty} \P\left\{\sum\limits_{k=1}^n S_k
<\varepsilon\sum\limits_{k= 1}^{n} \frac{1}{k^2}\right\}\nonumber\\
&\geq \limsup_{n\to\infty} \P \bigcap_{k = 1}^n\left\{S_k
<\frac{\varepsilon}{k^2}\right\}= \lim_{n\to\infty} \P \bigcap_{k = 1}^n\left\{S_k
<\frac{\varepsilon}{k^2}\right\}.\label{eq:1}
\end{align}
 Let $B_0:=\Omega$, and
$$
B_n:=\bigcap\limits_{k=1}^n\left\{S_k< \frac{\varepsilon}{k^2}\right\},\quad n\geq 1.
$$
Note that $\P B_n >0, n\in\N$. Indeed,   we have
\begin{align*}
\P B_n &= \P \bigcap\limits_{k=1}^n\left\{S_k< \frac{\varepsilon}{k^2}\right\}=  \P  \bigcap\limits_{k=1}^n\left\{T_k < \frac{\varepsilon}{k^2}+T_{k-1}\right\}
\geq      \P\left\{T_{n}< \frac{\varepsilon}{n^2}\right\} \\
& = \P\left\{N\(0, \frac{\varepsilon}{n^2} \)\geq n\right\} \stackrel{\eqref{eq:thinning}}{=}
\P\left\{
\int_{{(0 ,\frac{\varepsilon}{n^2})}\times \R_{\geq0}}
1\left\{z\leq \lambda(s)\right\}
\mathcal{N}(\d s\times \d z) \geq n
\right\},
\end{align*}
where
\begin{align*}
\lambda(s) &= \exp\left\{\nu + \int_{(-\infty,0]} h(s -r) N^0(\d r)+   \int_{(0,s)}\underbrace{h(s -r)}_{\geq 0\text{ by ass.}} N(\d r) \right\}\\
&\geq \exp\left\{\nu + \int_{(-\infty,0]} {h(s -r)} N^0(\d r)  \right\} =: \overline{\lambda}(s).
\end{align*}
Note that $\overline{\lambda}>0$ by \eqref{ass:initial_cond2}. Since $N^0$ is independent of  $\mathcal{N}$ we have
\begin{align}
\P B_n\nonumber
&\geq \P\left\{
\int_{{(0 ,\frac{\varepsilon}{n^2})}\times \R_{\geq0}}
1\left\{z\leq \overline{\lambda}(s) \right\}\mathcal{N}(\d s\times \d z) \geq n \right\}\nonumber\\
& = \E\left( \P\left\{
\int_{{(0 ,\frac{\varepsilon}{n^2})}\times \R_{\geq0}}
1\left\{z\leq \overline{\lambda}(s)\right\}\mathcal{N}(\d s\times \d z) \geq n\Bigg| \mathcal{F}_0^N
\right\}\right )> 0\label{eq:positive},
\end{align}
because by independence the conditional probability inside the expectation is a.s. strictly positive as the probability of a Poisson random variable
 being greater or equal some finite $n$ with strictly positive conditional mean $\int_{(0 ,\frac{\varepsilon}{n^2}]} \overline{\lambda}(s) ds >0$.
Therefore,
we may factor the probabilities in the right hand side of $\eqref{eq:1}$ in probabilities conditional on the sets $B_n$'.
\begin{align}
&\P\{N((0,\delta])=\infty\}\nonumber\\
&\geq \lim_{n\to\infty} \prod_{k=1}^n\P\(S_k
<\frac{\varepsilon}{k^2}\Bigg| B_{k-1}\) = \prod_{k=1}^\infty \P\(S_k
<\frac{\varepsilon}{k^2}\Bigg| B_{k-1}\) \label{eq:2} .
\end{align}
Now we show that the infinite product on the right hand side of \eqref{eq:2} is strictly larger than zero. For the first factor, note that
\begin{align}
&\P(S_1<{\varepsilon}| B_{0}) =\P(T_1<{\varepsilon})=  \P(N((0,\varepsilon)) > 0)
 = 1 - \P(N((0,\varepsilon)) = 0)\nonumber\\
& = 1-  \E \exp\Bigg\{-\int_{(0,\varepsilon)}\exp\(\nu+ \int_{(-\infty, 0]}h(t-s)N^0(\d s)\)\d t\Bigg\}>0
\label{eq:first_factor}
 \end{align}
 by assumption \eqref{ass:initial_cond2}.
For $k>1$, let $\mathcal{G}_k:= \sigma(N^0, T_1, T_2, \dots, T_{k})$ and note that $1_{B_{k-1}}$ is $\mathcal{G}_{k-1}$-measurable.
Consider
\begin{align}
&\E\left[1{\left\{ S_k\geq \frac{\varepsilon}{k^2}\right\}}1_{B_{k-1}}\Bigg|\mathcal{G}_{k-1}\right]
 = 1_{B_{k-1}}\P\left(T_k > T_{k-1} + \frac{\varepsilon}{k^2} \Big| \mathcal{G}_{k-1}\right)\nonumber\\
& \stackrel{\eqref{eq:thinning}}{=} 1_{B_{k-1}}
\P\Bigg(
\cN\Bigg\{
 (t,z): t \in \left(T_{k-1} , T_{k-1} +\frac{\varepsilon}{k^2}\right],\nonumber\\
 &\hspace{4cm}z \in \left[0,\exp\left\{\nu  +  \sum_{j=1}^{k-1} h(t - T_j)
 \right\}
 \right)
 \Bigg\} = 0
 \Big|\mathcal{G}_{k-1}\Bigg)
\nonumber\\
& = 1_{B_{k-1}}\exp\left\{-\int\limits_{T_{k-1}}^{T_{k-1}+\frac{\varepsilon}{k^2}}\exp\( \nu +\sum\limits_{j=1 }^{k-1} h \(t-T_j\)\)\d t \right\} \label{eq:3}.
\end{align}
For $t\in [T_{k-1}, T_{k-1}+\varepsilon/k^2]$ and  $j = 1,2,\dots,k-1$, we find that on $B_{k-1}$
\begin{equation*}
t -T_j < T_{k-1} + \frac{\varepsilon}{k^2} - T_j < \sum_{ i = 1}^{k-1} S_i +\frac{\varepsilon}{k^2}\stackrel{B_{k-1}}{<} \sum_{ i = 1}^{k} \frac{\varepsilon}{i^2} < \delta.
\end{equation*}
Hence, we conclude that
\begin{align*}
\nonumber&1_{B_{k-1}}\exp\left\{-\int\limits_{T_{k-1}}^{T_{k-1}+ \frac{\varepsilon}{k^2}}\exp\( \nu +\sum\limits_{1\leq j <k} {h \(t-T_j\)}_{}\)\d t \right\}\\
&\leq1_{B_{k-1}}\exp\left\{-\int\limits_{T_{k-1}}^{T_{k-1}+\frac{\varepsilon}{k^2}}\exp\( \nu +(k-1) h_\delta \)\d t \right\}\nonumber\\
&= 1_{B_{k-1}}\exp \left\{- \frac{\varepsilon}{k^2}\exp\( \nu +(k-1) h_\delta \) \right\}.
\end{align*}
Applying this upper bound to \eqref{eq:3} we obtain
$$
\E\left[1{\left\{ S_k\geq \frac{\varepsilon}{k^2}\right\}}1_{B_{k-1}}\Big| \mathcal{G}_{k-1}\right]\leq 1_{B_{k-1}}\exp \left\{- \frac{\varepsilon}{k^2}\exp\( \nu +(k-1) h_\delta \) \right\}.
$$
Taking expectations and dividing both sides by $\P B_{k-1} > 0$  yields
\begin{align*}
\P\left(S_k\geq \frac{\varepsilon}{k^2}\Bigg|B_{k-1}\right) \leq \exp \left\{- \frac{\varepsilon}{k^2}\exp\( \nu +(k-1) h_\delta \)\right\},
\end{align*}
so
\begin{align*}
\P\left(S_k< \frac{\varepsilon}{k^2}\Big|B_{k-1}\right) \geq 1-\exp \left\{- \frac{\varepsilon}{k^2}\exp\( \nu +(k-1) h_\delta \) \right\},\quad k>1.
\end{align*}
Plugging this inequality (together with \eqref{eq:first_factor} for the first factor) into \eqref{eq:2} yields the lower bound
\begin{equation}
\P\{N((0,\delta])=\infty\}\geq \prod\limits_{k\geq 1}\(1-\exp \left\{- \frac{\varepsilon}{k^2}\exp\( \nu +(k-1) h_\delta \) \right\}\).\label{eq:test2}
\end{equation}
It is well known (see e.g. \citet[Chapter 7, Theorem 4]{knopp51}) that the infinite product on the right hand side of \eqref{eq:test2} is convergent (positive in particular) if and only if
 $$
\sum_{k= 1}^\infty \exp \left\{- \frac{\varepsilon}{k^2}\exp\( \nu +(k-1) h_\delta \) \right\} < \infty,
$$
which is
 the case as $\varepsilon > 0$ and $h_\delta  > 0$.
 Therefore
 $\P\{N((0,\delta])=\infty\}>0$, i.e., $N$ is explosive.
 The proof is complete.
\qed\end{proof}

{\bf Examples.} We present a collection of pairs of initial conditions $N^0$ and memory functions $h$ covered by the theorem:

\begin{enumerate}[label =(\roman*)]
\item If $N^0(\R_{\leq 0}) < \infty$ a.s., then \eqref{ass:initial_cond1} and \eqref{ass:initial_cond2}  hold with no further restrictions on $h$. In fact, in this case, the integrals in \eqref{ass:initial_cond1} and \eqref{ass:initial_cond2}  are just sums of finitely many finite terms. In particular, \eqref{ass:initial_cond1} and \eqref{ass:initial_cond2} hold for the void initial condition.

\item If $\sup_{s>0} h(s) < \infty$ and $h$ is eventually non-positive, i.e. there exists some $t_0>0$ such that $h(t) \leq 0$ for $t > t_0$, then  \eqref{ass:initial_cond1} holds for all initial conditions $N^0$. Indeed,
\begin{align*}
\sup_{t>0}&\int_{(-\infty,0]} {h(t -r)} N^0(\d r)
\leq \sup_{ t>0}\int_{(-t_0,0]} {{h}(t -r)} N^0(\d r)\\
&\leq \sup_{t>0}\int_{(-t_0,0]} {\sup_{s>0}h(s)} N^0(\d r)
\leq \sup_{s>0}h(s)N^0{((-t_0,0])} < \infty
\end{align*}
a.s. because $N^0{((-t_0,0])}$ is a.s. finite by assumption \eqref{eq:left_limit}.
\item If $\inf_{s > 0}h(s)>-\infty$ and $h$ is eventually non-negative, i.e. there exists some $t_0>0$ such that $h(t) \geq 0$ for $t > t_0$, then  \eqref{ass:initial_cond2} holds for all initial conditions $N^0$. Indeed,
\begin{align*}
\int_{(-\infty,0]}& {h(t -r)} N^0(\d r)  =
\int_{(-\infty,-t_0]} {h(t -r)} N^0(\d r) + \int_{(-t_0,0]} {h(t -r)} N^0(\d r)\\
&\geq  \int_{(-t_0,0]} {h(t -r)} N^0(\d r)\geq \inf_{s > 0}h(s)N^0((-t_0,0]) > -\infty
\end{align*}
a.s. because $N^0{((-t_0,0])}$ is a.s. finite by assumption \eqref{eq:left_limit}.
\item  If there exists a finite $c\geq 0$
such that $\E N^0(B) \leq c \int_B\d t$ for all $B\in\mathcal{B}(\R_{\leq 0})$, then assumption \eqref{ass:initial_cond1} holds if there exists an integrable decreasing function $\widetilde{h}$ on $\R_+$ such that  $\widetilde{h}\geq h$.
Indeed,
\begin{align*}
\sup_{t>0} &\int_{(-\infty,0]} {h(t -r)} N^0(\d r)
\leq \sup_{t>0}\int_{(-\infty,0]} {\widetilde{h}(t -r)} N^0(\d r)\\
&\leq \sup_{t>0}\int_{(-\infty,0]} {\widetilde{h}( -r)} N^0(\d r)
\leq \int_{(-\infty,0]} {\widetilde{h}(-r)} N^0(\d r) < \infty
\end{align*}
a.s.\ because
$$
\E \int_{(-\infty,0]} {\widetilde{h}(-r)} N^0(\d r) \leq c \int_{(-\infty,0]} {\widetilde{h}(-r)} \d r<\infty.
$$
\item If there exists a finite $c\geq 0$
such that $\E N^0(B) \geq c \int_B\d t$ for all $B\in\mathcal{B}(\R_{\leq 0})$, then  assumption  \eqref{ass:initial_cond2} holds if $\int_0^\infty h^-(t)\d t <\infty$. Indeed,
$$
\E \int_{(-\infty,0]} {h(t -r)} N^0(\d r) \geq c \int_{-\infty}^0 {h(t -r)}\d r \geq  - c \int_{t}^\infty {h^-(r)}\d r > -\infty
$$
and   \eqref{ass:initial_cond2} follows.
\end{enumerate}

\section{Stability}\label{sec:stability}
\noindent Recall that a point process $N_{\mathrm{stat}}$ on $\R$ is \emph{stationary},
if the laws of $N_{\mathrm{stat}}(\cdot)$ and $N_{\mathrm{stat}}(\cdot + t)$ coincide for each $t\in\R$. Intuitively, `stability' means `convergence to a stationary state'. Specifically, we follow up on the definition of \citet[Definition 1]{bremaud96}.
 \begin{definition}[Stability] \label{def:stability}
 	Dynamics \eqref{eq:dynamics} extended on $\R$ by the same formula are \emph{stable in distribution with respect to an initial condition $N^0$}, a point process supported on $\R_{\leq 0}$, if
\begin{enumerate}[label =(\roman*)]
	\item \label{item:stationary_version} there exists a stationary point process $N_{\mathrm{stat}}$ following dynamics \eqref{eq:dynamics}  with an  $\{\mathcal{F}^{N_{\mathrm{stat}}}\}_{t\in\R}$-intensity given by \eqref{eq:intensity} for all $a,b\in\R$ such that $a<b$, and
\item for all point processes $N$ admitting initial condition $N^0$ on $\R_{\leq 0}$ and following dynamics  \eqref{eq:dynamics} on $\R_{\geq 0}$, we have $N_t(\cdot):=N(\cdot + t)\to N_{\mathrm{stat}}(\cdot)$   weakly as $t\to\infty$, i.e.
$$\lim_{t\to\infty}\E\int_{\R_{\geq 0}} f(x) N_t( \d x)\to\E\int_{\R_{\geq 0}} f(x) N_\mathrm{stat}(\d x)
$$ for all continuous functions $f: \R \to \R_{\geq 0}$ with compact support.

\end{enumerate}
\end{definition}

Note that for point processes weak convergence coincides with convergence of the finite dimensional distributions; see \citet[Theorem 11.1.VII.]{daley03}.
 It is shown in \citet[Theorem 1]{bremaud96} that a sufficient condition for stability of the dynamics of nonlinear Hawkes processes with mild assumptions on the initial condition is  $\phi$ being Lipschitz, where the Lipschitz parameter is strictly less than $(\int_0^\infty |h(s)|\d s)^{-1}$. Exponential rate functions are in general not covered by this result. However, in the special case of a loglinear Hawkes process with nonpositive memory function $h$, we may apply \citet[Theorem 2]{bremaud96} on bounded Lipschitz rate functions.

\begin{theorem}[Sufficient condition for stability]\label{thm:sufficient_stability_condition}

Let $h$ be a nonpositive memory function satisfying $\int_{0}^\infty t|h(t)|  \d t<\infty$, and let $N^0$ be an initial condition such that
\begin{equation}\label{eq:initial_condition_stability}
 \lim_{t\to\infty} \int_{t}^\infty\int_{(-\infty, 0]} |h(s - u)| N^0(\d u)\d s = 0 \quad\text{a.s.}
\end{equation}
Then the dynamics \eqref{eq:log_lin_dynamics} of a loglinear Hawkes process are stable in distribution with respect to initial condition $N^0$. Moreover, for such a memory function $h$, there is only one stationary process following dynamics given by formula \eqref{eq:log_lin_dynamics} on the whole real line.
\end{theorem}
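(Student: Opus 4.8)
The plan is to reduce to \citet[Theorem 2]{bremaud96} by exploiting that, for a nonpositive memory function, the exponent $\nu+\int_{(-\infty,t)}h(t-s)N(\d s)$ never exceeds $\nu$, so that the rate function is effectively bounded and Lipschitz.

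First I would note that, since $h\le 0$, the quantity $\int_{(-\infty,t)}h(t-s)N(\d s)$ always lies in $[-\infty,0]$; moreover $\int_0^\infty t|h(t)|\,\d t<\infty$ forces $h\in L^1([1,\infty))$, so that this integral is in fact a.s.\ finite for any point process whose intensity is bounded. Hence one may replace $\phi(x)=e^{\nu+x}$ by the truncation $\widetilde\phi(x):=\exp\{\nu+(x\wedge 0)\}$ without changing the dynamics: any point process following \eqref{eq:log_lin_dynamics}---whether with an initial condition $N^0$ on $\R_{\le 0}$ or as a two-sided process on $\R$---is precisely a nonlinear Hawkes process in the sense of \eqref{eq:dynamics} with rate function $\widetilde\phi$ and memory $h$, and conversely. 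The function $\widetilde\phi$ is bounded by $e^\nu$ and Lipschitz with constant $e^\nu$ (its derivative is $e^{\nu+x}\le e^\nu$ on $(-\infty,0)$ and $0$ on $(0,\infty)$), so every such process has intensity bounded by $e^\nu$, which both justifies the a.s.-finiteness just used and shows that any stationary version has constant intensity $\rho\le e^\nu<\infty$ and, in particular, is nonexplosive.

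Second, I would check the hypotheses of \citet[Theorem 2]{bremaud96} for the pair $(\widetilde\phi,h)$: $\widetilde\phi$ is bounded and Lipschitz; $h$ satisfies the moment condition $\int_0^\infty t|h(t)|\,\d t<\infty$; and $N^0$ satisfies the required vanishing-influence condition, which is exactly \eqref{eq:initial_condition_stability}, since for $h\le 0$ the contribution of $N^0$ to $\lambda(t)$ is measured by $\int_{(-\infty,0]}|h(t-u)|\,N^0(\d u)$ and \eqref{eq:initial_condition_stability} forces the tail of its time-integral to vanish. Their theorem then delivers both parts of Definition~\ref{def:stability}: a stationary point process $N_{\mathrm{stat}}$ on $\R$ following \eqref{eq:log_lin_dynamics} with the stationary intensity formula \eqref{eq:intensity}, and the weak convergence $N(\cdot+t)\to N_{\mathrm{stat}}(\cdot)$ as $t\to\infty$ for every admissible $N^0$.

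Third, for uniqueness let $N'$ be any stationary point process on $\R$ following \eqref{eq:log_lin_dynamics}. Its intensity is bounded by $e^\nu$, so $\E\,N'(B)=\rho\,|B|$ with $\rho\le e^\nu$; hence, by Tonelli, $\E\int_t^\infty\int_{(-\infty,0]}|h(s-u)|\,N'(\d u)\,\d s=\rho\int_t^\infty\int_s^\infty|h(v)|\,\d v\,\d s$, which tends to $0$ as $t\to\infty$ because $\int_0^\infty\int_s^\infty|h(v)|\,\d v\,\d s=\int_0^\infty v|h(v)|\,\d v<\infty$; as the inner double integral is nonincreasing in $t$ with finite expectation at $t=0$, it converges a.s.\ to a limit whose expectation is $0$, hence to $0$, so \eqref{eq:initial_condition_stability} holds for $N^0:=N'|_{\R_{\le 0}}$. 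Applying the convergence already established gives $N'(\cdot+t)\to N_{\mathrm{stat}}(\cdot)$ weakly, while stationarity of $N'$ gives $N'(\cdot+t)\overset{\mathrm d}{=}N'(\cdot)$ for all $t$; hence $N'\overset{\mathrm d}{=}N_{\mathrm{stat}}$. The step I expect to be the main obstacle is the second one: matching \eqref{eq:initial_condition_stability} precisely to the initial-condition hypothesis of \citet[Theorem 2]{bremaud96} (stated there in their Poisson-embedding framework in terms of the influence of the past on the driving intensity) and confirming that their two-sided existence construction goes through verbatim once $\phi$ is truncated to a bounded Lipschitz function; the reduction in the first step and the uniqueness argument in the third are essentially bookkeeping once the uniform bound $e^\nu$ on the intensity is in hand.
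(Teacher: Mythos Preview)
Your approach is essentially the same as the paper's: replace $\phi(x)=e^{\nu+x}$ by the bounded Lipschitz truncation $\widetilde\phi(x)=\exp\{\nu+(x\wedge 0)\}$ (which the paper writes piecewise), observe that for $h\le 0$ this leaves the dynamics unchanged, and then invoke \citet[Theorem~2]{bremaud96}. The paper simply cites that theorem for uniqueness as well, whereas you spell out why any stationary solution satisfies \eqref{eq:initial_condition_stability}; this is extra care but not a different route.
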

\begin{proof}
Note that because $h$ is nonpositive we obtain from Theorem~\ref{thm:nonexplosion}\ref{item:sufficient} that $N$ is nonexplosive. So we may rewrite \eqref{eq:log_lin_dynamics} as
 $$
\lambda(t) = \phi\(\int_{(-\infty, t)} h(t -s) N(\d s)\)
$$
with
$$
\phi(x) =
\begin{cases}
e^{\nu+x},& x\leq 0, \\
e^{\nu},& x >0.
\end{cases}
$$
This rate function $\phi$ is bounded and Lipschitz continuous (with Lipschitz constant $e^\nu$), and
$$
\phi\(\int_{(-\infty, t)} h(t -s) N(\d s)\) = \exp\left\{\nu + \int_{(-\infty, t)} h(t -s) N(\d s)\right\}
$$
holds for $h$ nonpositive.
We have $\int_0^\infty |h(t)|  \d t<\infty$ by assumption. So all hypotheses of \citet[Theorem 2]{bremaud96} are fulfilled and thus  $N$ is stable in variation, and consequently stable in distribution with respect to $N^0$. The same theorem gives us that the stationary limit is unique and does not depend on the choice of the initial condition $N^0$ as long as \eqref{eq:initial_condition_stability} holds.
\qed\end{proof}

Assuming that the dynamics of a loglinear Hawkes process are stable, we obtain existence of a stationary version $N_{\mathrm{stat}}$.   We call the stationary limit in Theorem \ref{thm:sufficient_stability_condition} a \emph{stationary loglinear Hawkes process}. From the existence of such a stationary version, we derive some necessary stability conditions.

\begin{theorem}[Necessary stability condition]\label{thm:necessary_stability_condition}
Let $h$ and $\nu$ be the parameters of the stable dynamics of a  loglinear Hawkes process. Assume that $N_{\mathrm{stat}}$, the stationary limit process  with $\lambda_{\mathrm{stat}}$ the $\{\mathcal{F}_t^{N_{\mathrm{stat}}}\}_{t\in\R}$-intensity, is such that  $ \E \lambda_{\mathrm{stat}}(t) \equiv \E \lambda_{\mathrm{stat}}(0)= e^c , t\in\R,$ for some $c\in[-\infty,\infty)$.
Then, necessarily,
\begin{equation}
\int_{0}^\infty h(s)\d s \leq (c - \nu ) e^{-c},\label{eq:ineq1}
\end{equation}
and
\begin{equation}
\int_{0}^\infty h(s)\d s \leq  e^{-(1+\nu )}\label{eq:universal_ub}.
\end{equation}
Moreover, if $\int_{0}^\infty h(s)\d s>-\infty$, then   $N_{\mathrm{stat}}$ must be nontrivial, i.e. \\ $\P\{N_{\mathrm{stat}}(\R) = 0\} < 1$.
\end{theorem}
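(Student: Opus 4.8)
The plan is to extract both inequalities from a single application of Jensen's inequality to the stationary dynamics and then to read off nontriviality as a by-product. Since $N_{\mathrm{stat}}$ follows the dynamics \eqref{eq:log_lin_dynamics} on all of $\R$ and, being a locally finite stationary process, does not explode, we have $\lambda_{\mathrm{stat}}(t)=\exp\{\nu+\int_{(-\infty,t)}h(t-s)\,N_{\mathrm{stat}}(\d s)\}$ for all $t$. Combining the intensity property \eqref{eq:intensity} (valid for all $a<b$ by Definition~\ref{def:stability}\ref{item:stationary_version}), the hypothesis $\E\lambda_{\mathrm{stat}}(t)\equiv e^{c}$ and Tonelli's theorem, the first moment measure of $N_{\mathrm{stat}}$ equals $e^{c}$ times Lebesgue measure, i.e.\ $\E N_{\mathrm{stat}}((a,b])=(b-a)e^{c}$. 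I would first dispose of degenerate cases: if $\int_{0}^{\infty}h(s)\,\d s=-\infty$, then \eqref{eq:ineq1} and \eqref{eq:universal_ub} hold trivially and the last assertion has void hypothesis; and $\int_{0}^{\infty}h(s)\,\d s=+\infty$ cannot occur, since $\int_{0}^{\infty}h^{+}(s)\,\d s=\infty$ would make the exponent defining $\lambda_{\mathrm{stat}}(0)$ have infinite mean, whence the Jensen step below forces $\E\lambda_{\mathrm{stat}}(0)=\infty$, contradicting $c<\infty$. It therefore suffices to treat $\int_{0}^{\infty}h(s)\,\d s\in\R$, where $h$ is in fact integrable on $\R_{\geq0}$.

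For the core step, set $X:=\int_{(-\infty,0)}h(-s)\,N_{\mathrm{stat}}(\d s)$. By Campbell's theorem applied to the mean measure $e^{c}\,\d s$, $\E X=e^{c}\int_{0}^{\infty}h(u)\,\d u$, which is finite. Convexity of $\exp$ and Jensen's inequality then give
\[
e^{c}=\E\lambda_{\mathrm{stat}}(0)=\E\exp\{\nu+X\}\geq\exp\{\nu+\E X\}=\exp\Big\{\nu+e^{c}\int_{0}^{\infty}h(u)\,\d u\Big\}.
\]
The right-hand side is strictly positive, so $c>-\infty$; taking logarithms yields $c\geq\nu+e^{c}\int_{0}^{\infty}h(u)\,\d u$, which rearranges to \eqref{eq:ineq1}. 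Inequality \eqref{eq:universal_ub} follows by maximizing the right-hand side of \eqref{eq:ineq1} over $c$: the function $g(x)=(x-\nu)e^{-x}$ satisfies $g'(x)=(1+\nu-x)e^{-x}$, so it has a global maximum at $x=1+\nu$ with value $g(1+\nu)=e^{-(1+\nu)}$; since \eqref{eq:ineq1} holds for the actual value of $c$, it holds a fortiori with $e^{-(1+\nu)}$ on the right-hand side.

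For nontriviality, assume $\int_{0}^{\infty}h(s)\,\d s>-\infty$, so that $h$ is integrable on $\R_{\geq0}$ and $\int_{0}^{\infty}h(s)\,\d s\in\R$. If $c=-\infty$, the displayed Jensen bound reads $0\geq\exp\{\nu+0\cdot\int_{0}^{\infty}h(u)\,\d u\}=e^{\nu}>0$, which is impossible; hence $c\in\R$, so $\E N_{\mathrm{stat}}((0,1])=e^{c}>0$, which gives $\P\{N_{\mathrm{stat}}((0,1])\geq1\}>0$ and therefore $\P\{N_{\mathrm{stat}}(\R)=0\}<1$. (Alternatively, a trivial $N_{\mathrm{stat}}$ would have vanishing past integral, hence $\lambda_{\mathrm{stat}}\equiv e^{\nu}$, forcing $\E N_{\mathrm{stat}}((0,1])=e^{\nu}>0$, contradicting $N_{\mathrm{stat}}((0,1])=0$ a.s.)

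I expect the main obstacle to be measure-theoretic bookkeeping rather than algebra: one must check that $\lambda_{\mathrm{stat}}$ is the genuine $\{\mathcal{F}^{N_{\mathrm{stat}}}_{t}\}$-intensity so that the first moment measure is \emph{exactly} $e^{c}\,\d s$; justify Campbell's formula in this signed setting; and, above all, control $h^{+}$ and $h^{-}$ against the mean measure so that $\E X$ is well-defined (and finite in the nondegenerate case), which is also what licenses the use of Jensen's inequality. Once these points and the endpoint cases $c=-\infty$ and $\int_{0}^{\infty}h(s)\,\d s=\pm\infty$ are handled as indicated, the chain of inequalities closes.
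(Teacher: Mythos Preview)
Your proposal is correct and follows essentially the same route as the paper: apply Jensen's inequality to $\E\exp\{\nu+X\}$ with $X=\int_{(-\infty,0)}h(-s)\,N_{\mathrm{stat}}(\d s)$ and $\E X=e^{c}\int_{0}^{\infty}h(s)\,\d s$, take logarithms to obtain \eqref{eq:ineq1}, then maximize $(c-\nu)e^{-c}$ over $c$ to get \eqref{eq:universal_ub}, and finally observe that \eqref{eq:ineq1} (or the Jensen display directly) rules out $c=-\infty$ when $\int_{0}^{\infty}h(s)\,\d s>-\infty$. Your treatment of the endpoint cases $\int_{0}^{\infty}h(s)\,\d s=\pm\infty$ and the explicit invocation of Campbell's formula are more detailed than the paper's, but the underlying argument is identical.
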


\begin{proof}
By the stability assumption there exists a stationary loglinear Hawkes process $N_{\mathrm{stat}}$ with finite average intensity $e^c = \E \lambda_{\mathrm{stat}}(0) $ for some $c\in\R\cup\{-\infty\}$.
We see from Jensen's inequality that
\begin{align*}
e^c =  \E \lambda_{\mathrm{stat}}(0)& \geq \exp\left\{\nu  + \E \int_{(-\infty, 0)}h(0- s) N_{\mathrm{stat}}(\d s)\right\}\\
& = \exp\left\{\nu  + e^c \int_{0}^\infty h(s)\d s\right\}.
\end{align*}
Taking logarithms yields \eqref{eq:ineq1}. Furthermore, the right hand side of \eqref{eq:ineq1} attains its maximum for $c = 1 + \nu$.
So
we obtain the universal upper bound \eqref{eq:universal_ub}.
Finally, because $\nu\in\R$  then for $h$ such that $\int_{0}^\infty h(s)\d s >-\infty$  inequality \eqref{eq:ineq1} excludes the possibility that $c = -\infty$. Consequently $N_{\mathrm{stat}}$ is nontrivial.
\qed\end{proof}
\section{Outlook}\label{sec:outlook}
In this paper we prove existence of nonlinear Hawkes process with the rate function given by
$\phi(x)=\exp(\nu+x)$.
We give criteria for nonexplosion and explosion  of loglinear Hawkes processes starting at time zero with  general initial conditions. 
Concerning stability, i.e., convergence to stationary versions, we can only give a sufficient condition in
the case when the memory function $h$ is nonpositive. In this case, an event never creates future excitement but only inhibition.

For more general memory functions $h$, the present paper gives two necessary conditions for stability. From Theorem~\ref{thm:nonexplosion}\ref{item:necessary}, we see that memory functions $h$ with
$$
\inf_{t\in (0,\delta]}h(t) >0 \quad\text{ for some $\delta>0$}
$$
cannot yield stable loglinear Hawkes processes. Simulations indicate that stable versions of loglinear Hawkes processes with partly positive excitement exist. In view of the seemingly good fit of the model in computational neuroscience, sufficient stability results could help to promote the model also in other fields of application. This problem shall be tackled in future work.

\bibliographystyle{elsarticle-harv}
\bibliography{biblio.bib}

\end{document}